\newtheorem{theorem}{Theorem}[section]
\newtheorem{lemma}[theorem]{Lemma}
\theoremstyle{definition}
\theoremstyle{definition}
\newtheorem{example}{Example} 
\title{Stability and Convergence Analysis of an Exact Finite Difference Scheme for Fredholm Integro-Differential Equations}  
\author[1]{Mehebub Alam} 
\author[1]{Rajni Kant Pandey}
\affil[1]{Department of Mathematics, Indian Institute of Technology, Kharagpur, 721302, India}
\date{}
\begin{document}
\maketitle

\abstract{This report addresses the boundary value problem for a second-order linear singularly perturbed FIDE. Traditional methods for solving these equations often face stability issues when dealing with small perturbation parameters. We propose an exact finite difference method to solve these equations and provide a detailed stability and $\varepsilon$-uniform convergence analysis. Our approach is validated with an example, demonstrating its uniform convergence and applicability, with a convergence order of 1. The results illustrate the method's robustness in handling perturbation effects efficiently.}
\section*{Keywords}Exact Finite Difference Method, Fredholm integro-differential equations, Singularly perturbed, $\epsilon$-uniform Convergence, fitted difference scheme
\section{Introduction}
Fredholm integro-differential equations (FIDEs) are crucial in many scientific disciplines and practical fields, including physics, mechanics, fluid dynamics, electrostatics, chemistry, biology, astronomy, and more.\cite{kythe2002computational,polyanin2008handbook,cont2005integro}.
In engineering and science, these equations are crucial for modeling various phenomena, leading researchers to develop theoretical frameworks, numerical computations, and analyses for FIDEs.
Several semi-analytical techniques, such as the Legendre polynomial approximation, differential transform method, variational iteration method, and homotopy perturbation method, have been proposed in the literature to solve these FIDEs.  \cite{bildik2010comparison,javidi2007numerical,ziyaee2015differential,lanlege2023solution}. Furthermore, many numerical methods have been proposed in recent times. These include the Galerkin-Chebyshev wavelets method, 
  the Nyström method, extrapolation method, the Sinc-Collocation method, and so on ( see, e.g.,\cite{tahernezhad2020exponential,henka2022numerical,tair2021solving,brezinski2019extrapolation,mohsen2007sinc}). However, these studies have only considered regular cases.
  
  In this report, we consider the boundary value problem of second-order FIDE of the form
  \begin{equation}
  \label{eq:1}
       Lu:=\varepsilon u''(x)+a(x) u'(x)-b(x)u(x)+\lambda \int_0^l K(x,s)u(s)ds=f(x), \ x\in \Omega :=(0,l),
  \end{equation}
subject to the boundary conditions
\begin{equation}
\label{eq:2}
    u(0)=A,\ u(l)=B,
\end{equation}
where, $\varepsilon \in (0, 1]$ is a small parameter, $\Bar{\Omega} = [0, l]$, $\lambda $ is a real parameter, the functions $a(x) \geq \alpha > 0, f(x) \  (x \in \bar{\Omega})$, $u(x)\   (x \in \bar{\Omega}), K(x, s)  \  ( (x, s) \in \bar{\Omega} \times \bar{\Omega})$ are
sufficiently smooth. Under these conditions, the problem (\ref{eq:1}) - (\ref{eq:2}) has unique solution u. Singularly perturbed differential equations are special equations that usually involve a tiny number $\varepsilon$ multiplying the highest order terms in the equation. When we solve these equations, we observe various phenomena happening at different scales. In certain narrow parts of the problem space, some derivatives change much faster than others. These narrow regions with rapid changes are called boundary or interior layers, depending on where they occur. Here $\varepsilon$ tends toward zero, the boundary layers appear in the neighborhood of $x = 0.$ Such equations are common in mathematical problems in the sciences and engineering, for example, the study of moving air and how it affects structures, the behavior of fluids, how electricity behaves in complicated situations, different ways to understand how populations grow, creating models for neural networks, materials that remember their previous state, and mathematical models for how tiny particles move in a chaotic fluid \cite{qin2014integral,ahmad2008some,nieto2007new}. Usual discretization methods for solving problems with very small variations are known to be unstable and often do not provide good solutions when the variations are extremely small. Hence, there is a need to create consistent numerical approaches to tackle such problems.

\section{Literature review} 
In recent years, there has been considerable research on singularly perturbed integro-differential equations, leading to numerous publications that propose various numerical schemes for solving these equations. This literature review highlights several key contributions in the field.

In 2018, Amiraliyev et al. \cite{uniform_amiraliyev_2018} introduced an exponentially fitted difference method on a uniform mesh for first-order equations, demonstrating first-order uniform convergence in $\varepsilon$. In 2020, Iragi and Munyakazi \cite{uniformly_iragi_2020} applied the fitted operator finite difference method, using the right-side rectangle rule and trapezoidal integration on a Shishkin mesh, to Volterra integro-differential equations with singularity, aiming to develop accurate and efficient numerical schemes.

In the same year, Mbroh et al. \cite{second_mbroh_2020} proposed a non-standard finite difference scheme that used the composite Simpson's rule and enhanced convergence order through Richardson extrapolation, focusing on improving accuracy and stability for these equations. Amiraliyev et al. \cite{fitted_amiraliyev_2020} also analyzed a nearly second-order accurate finite difference scheme for first-order SPFIDE.

In 2021, Cimen and Cakir \cite{uniform_cimen_2021} suggested an exponentially fitted difference scheme for singularly perturbed Fredholm integro-differential equations, aiming to capture the behavior of solutions with exponential terms accurately. Durmaz and Amiraliyev \cite{robust_durmaz_2021} developed a second-order homogeneous difference scheme on a Shishkin mesh for Fredholm integro-differential equations with layer behavior to provide accurate numerical solutions.

Amiraliyev et al. \cite{amiraliyev2021numerical} proposed a first-order uniformly convergent fitted finite difference scheme on a uniform mesh for Fredholm integro-differential equations. In 2022, Cakir et al. \cite{cakir2022numerical} derived a numerical approach for an initial-value problem for singularly perturbed nonlinear Fredholm integro-differential equations, achieving uniform convergence in the discrete maximum norm with respect to the perturbation parameter.

Amirali et al. \cite{amirali2023first} in 2022 proposed a first-order numerical method for first-order singularly perturbed nonlinear Fredholm integro-differential equations with integral boundary conditions. In 2023, Durmaz \cite{durmaz2023efficient} applied a fitted finite difference approach using a composite trapezoidal rule to both the integral part of the equation and the initial condition, achieving uniform second-order convergence.

Lastly, in 2023, Amiri \cite{amiri2023effective} developed an effective numerical method for solving a class of nonlinear singular two-point boundary value Fredholm integro-differential equations. By using an appropriate interpolation and a q-order quadrature rule of integration, the problem was approximated by nonlinear finite difference equations, achieving a convergence order of $O(h^{\min\{7/2,q-1/2\}})$ in the $L_2$ norm, where $q$ is the order of the quadrature method. 
 The goal of this report is to design a homogeneous type difference scheme for the problem (\ref{eq:1})-(\ref{eq:2}). Our approach employs the exact (non-standard) finite-difference method to approximate the differential part together with the composite trapezoidal rule for the integral part to discretize the SPFIDE on a uniform mesh. 

 This study is organized subsequently. In Section 2, we report some preliminary work that is relevant to the study. Section 3, proposes a difference scheme for SPFIDE. Later, in Section 4 we establish the error analysis for the scheme. Then, in Section 5, we present some numerical results that illustrate the scheme's performance. Finally, we conclude our work in Section 6.
 In this report, we will use the symbol $C$ to represent a generic constant that is independent of $\varepsilon$ and the mesh parameter. The notation $\|g\|_{\infty}$ signifies the maximum norm for any continuous function $g(x)$ over the associated closed interval. Also Let us define the forward, backward, and second-order central finite difference operators as
$$D^{+} u_{j}=\frac{u_{j+1}-u_{j}}{h}, D^{-} u_{j}=\frac{u_{j}-u_{j-1}}{h}, D^{+} D^{-} u_{j}=\frac{D^{+} u_{j}-D^{-} u_{j}}{h}$$.
\section{Numerical discretization}

The theoretical basis of the nonstandard discrete numerical method is based on the development of the exact finite difference method. The author of \cite{mickens2005advances} presented techniques and rules for developing nonstandard finite difference methods for different problem types. In Mickens's rules, to develop a discrete scheme, the denominator function for the discrete derivatives must be expressed in terms of more complicated functions of step sizes than those used in the standard procedures. These complicated functions constitute a general property of the schemes, which is useful while designing reliable schemes for such problems.
For the problem of the form in (\ref{eq:1})-(\ref{eq:2}), to construct the exact finite difference scheme. 
Consider the constant coefficient homogeneous sub-problems corresponding to (\ref{eq:1}) are
\begin{gather}
\varepsilon u^{\prime \prime}(x)+a u^{\prime}(x)-b u(x)=0  \label{eq4}\\
\varepsilon u^{\prime \prime}(x)+a u^{\prime}(x)=0 \label{eq5}
\end{gather}
where $a(x) \geq a$ and $b(x) \geq b$. Two linear independent solutions of (\ref{eq4}) are $\exp \left(\lambda_{1} x\right)$ and $\exp \left(\lambda_{2} x\right)$, where
\begin{equation}
\lambda_{1,2}=\frac{-a \pm \sqrt{a^{2}+4 \varepsilon b}}{2 \varepsilon} \label{eq6}
\end{equation}
We discretize the domain $[0,l]$ using the uniform mesh length $\Delta x=h$ such that $\Omega=\left\{x_{i}=x_{0}+i h, 1,2, \ldots, N, x_{0}=0, x_{N}=l, h=\frac{l}{N}\right\}$, where $N$ denotes the number of mesh points. We denote the approximate solution to $u(x)$ at the grid point $x_{i}$ by $u_{i}$. Now our main objective is to calculate the difference equation, which has the same general solution as the differential equation (\ref{eq4}) has at the grid point $x_{i}$ given by $u_{i}=A \exp \left(\lambda_{1} x_{i}\right)+B \exp \left(\lambda_{2} x_{i}\right)$. Using the theory of difference equations, we have

\begin{equation}
\label{eq7}
det\left[\begin{array}{ccc}
u_{i-1} & \exp \left(\lambda_{1} x_{i-1}\right) & \exp \left(\lambda_{2} x_{i-1}\right)  \\
u_{i} & \exp \left(\lambda_{1} x_{i}\right) & \exp \left(\lambda_{2} x_{i}\right) \\
u_{i+1} & \exp \left(\lambda_{1} x_{i+1}\right) & \exp \left(\lambda_{2} x_{i+1}\right)
\end{array}\right]=0
\end{equation}
Simplifying (\ref{eq7}), we obtain
\begin{equation}
-\exp \left(-\frac{a h}{2 \varepsilon}\right) u_{i-1}+2 \cosh \left(\frac{h \sqrt{a^{2}+4 \varepsilon b}}{2 \varepsilon}\right) u_{i}-\exp \left(\frac{a h}{2 \varepsilon}\right) u_{i+1}=0 \label{eq8}
\end{equation}
which is an exact difference scheme for (\ref{eq4}).
After doing the arithmetic manipulation and rearrangement on (\ref{eq8}), for the constant coefficient problem (\ref{eq5}), we get
\begin{equation}
\varepsilon \frac{u_{i-1}-2 u_{i}+u_{i+1}}{\frac{h \varepsilon}{a}\left(\exp \left(\frac{a h}{\varepsilon}\right)-1\right)}+a \frac{u_{i+1}-u_{i}}{h}=0 \label{eq9}
\end{equation}
The denominator function becomes $\psi^{2}=\frac{h \varepsilon}{a}\left(\exp \left(\frac{h a}{\varepsilon}\right)-1\right)$. Adopting this denominator function for the variable coefficient problem, we write it as
\begin{equation}
\psi_{i}^{2}=\frac{h \varepsilon}{a_{i}}\left(\exp \left(\frac{h a_{i}}{\varepsilon}\right)-1\right) \label{eq10}
\end{equation}
where $\psi_{i}^{2}$ is the function of $\varepsilon, a_{i}$, and $h$.
By using the denominator function $\psi_{i}^{2}$ into the main scheme, we obtain the difference scheme as
\begin{equation}
L_{\varepsilon}^{N} u_{i} \equiv \varepsilon \frac{u_{i+1}-2 u_{i}+u_{i-1}}{\psi_{i}^{2}}+a_{i} \frac{u_{i+1}-u_{i}}{h}-b_{i} u_{i}+\lambda \int_{0}^{l} K(x_{i},s) u(s) d s=f_{i} -R_1 \label{eq11},
\end{equation}
where $R_1$ is given by
$$
\begin{aligned}
& R_1=f_{i}-L_{\varepsilon}^{N} u_{i} \\
& \quad=\left(\varepsilon u_{i}^{\prime \prime}+a_{i} u'_{i}-b_i u_i+\lambda \int_{0}^{l} K(x_{i},s) u(s) d s\right) \\
& \quad-\left(\varepsilon \frac{u_{i-1}-2 u_{i}+u_{i+1}}{\psi_{i}^{2}}+a_{i}\frac{u_{i+1}-u_{i}}{h}-b_iu_i +\lambda \int_{0}^{l} K_{i}(s) u(s) d s\right) \\
& \quad=\varepsilon \left(u_{i}^{\prime \prime}- \frac{u_{i-1}-2 u_{i}+u_{i+1}}{\psi_{i}^{2}}\right) +a_i\left( u'_i-\frac{u_{i+1}-u_{i}}{h}  \right)\\
&=\varepsilon\left(\frac{d^{2}}{d x^{2}}-\frac{h^2 D^{+} D^{-}}{\psi_{i}^{2}}\right) u_{i}+a_{i}\left(\frac{d}{d x}-D^{+}\right) u_{i}
\end{aligned}
$$
Moreover applying the composite trapezoidal rule to the integral term in (\ref{eq11}). To drive composite trapezoidal rule substitute $n=1$ in Newton-Cotes quadrature formulae in \cite{mickens2005advances} obtain the following result
$$
\begin{aligned}
& \int_{s_{0}}^{s_{1}} K\left(x_{i}, s\right) u(s) d s \\
& =\frac{h}{2}\left[K\left(x_{i}, s_{0}\right) u\left(s_{0}\right)+K\left(x_{i}, s_{1}\right)u\left(s_{1}\right)\right]
\end{aligned}
$$
where $K\left(x_{i}, s_{j}\right)=K_{i j} \ u\left(s_{j}\right)=u_{j}$ for $j=0,1, \ldots, n$. Now, the error of this method can be approximated in the following way.
Let the function $y=K\left(x_{i}, s\right) u(s)$ be continuous and possess a continuous derivatives in $\left[s_{0}, s_{1}\right]$. Expanding $y$ about $s=s_{0}$ we obtain\\

\begin{align}\nonumber
& y(x)=y_{0}+\left(s-s_{0}\right) y_{0}^{\prime}+\frac{1}{2}\left(s-s_{0}\right)^{2} y_{0}^{\prime \prime}+\frac{1}{3!}\left(s-s_{0}\right)^{3} y_{0}^{\prime \prime \prime}+\ldots \\\nonumber
& \int_{s_{0}}^{s_{1}} K\left(x_{i}, s\right) u(s) d s=\int_{0}^{1} h\left(y_{0}+p h y_{0}^{\prime}+\frac{(p h)^{2}}{2} y_{0}^{\prime \prime}+\frac{(p h)^{3}}{3!} y_{0}^{\prime \prime \prime}+\ldots\right) d p \\\nonumber
& \quad=h\left[p y_{0}+\frac{p^{2} h}{2} y_{0}^{\prime}+\frac{(p h)^{2}}{6} p y_{0}^{\prime \prime}+\frac{(p h)^{3}}{24} p y_{0}^{\prime \prime \prime}+\ldots\right]_{0}^{1} \\
& \quad=h y_{0}+\frac{h^2}{2}y_{0}^{\prime}+\frac{h^{3}}{6} y_{0}^{\prime \prime}+\frac{h^{4}}{24} y_{0}^{\prime \prime \prime}+\ldots \label{eq12}
\end{align}
Therefore
\begin{align}
& y_{0}=y_{0} \label{eq13} \\
& y_{1}=y_{0}+h y_{0}^{\prime}+\frac{h^{2}}{2} y_{0}^{\prime \prime}+\frac{h^{3}}{6} y_{0}^{\prime \prime \prime}+\ldots, \label{eq14} 
\end{align}
From (\ref{eq13}), (\ref{eq14}), we get:
\begin{align}
\frac{h}{2}\left[y_{0}+y_{1}\right]
& =h y_{0}+\frac{h^2}{2}y_{0}^{\prime}+\frac{h^{3}}{4} y_{0}^{\prime \prime}+\frac{h^{4}}{12} y_{0}^{\prime \prime \prime}+\ldots \label{eq15} 
\end{align}
From (\ref{eq12}) and (\ref{eq15}) we obtain,
$$
\int_{s_{0}}^{s_{1}} y d s-\frac{h}{2}\left[y_{0}+y_{1}\right]=\frac{-1}{12} h^{3} y_{0}^{'''}
$$
This is the error committed in the interval $\left[s_{0}, s_{1}\right]$.
Let $[0, l]$ be sub-divided into $N$ number of sub-divisions, $0=s_{0}<s_{1}<s_{2}<\cdots<s_{N}=l$, the integral over the whole interval is found by adding these integrations and is equal to
$$\int_{s_0}^{s_N}y ds=\frac{h}{2}\left( \sum_{j=1}^{N}\left( K_{ij-1}y_{j-1}+K_{ij}y_{j}\right)\right)$$
We obtain the errors in the interval $[0,l]$ as
\begin{equation}
    R_2=-\frac{1}{12}h^3\left[ y^{'''}_0+y^{'''}_1+\cdots+y^{'''}_{N-1}\right]=-\frac{lh^2}{12}u^{''}(\xi) \label{eq16}
\end{equation}
where $u^{''}(\xi)$ is the largest value of the $N$-quantities on 2nd derivatives. Therefore, the integral term in (\ref{eq11}) is approximated as:
\begin{align} \label{eq17}
\int_{0}^{l} K\left(x_{i}, s\right) u(s) d s= &\frac{h}{2}\left( \sum_{j=1}^{N}\left( K_{ij-1}u_{j-1}+K_{ij}u_{j}\right)\right) +R_{2} 
\end{align}
From (\ref{eq11}) and (\ref{eq17}) for $i=1,2, \ldots N-1$, we have the following relation
\begin{equation}  
L_{\varepsilon}^{N} u_{i}:=\varepsilon \frac{u_{i-1}-2 u_{i}+u_{i+1}}{\psi_{i}^{2}}+a_{i}\frac{u_{i+1}-u_{i}}{h} -b_iu_i+\lambda \frac{h}{2}\left( \sum_{j=1}^{N}\left( K_{ij-1}u_{j-1}+K_{ij}u_{j}\right)\right)=f_{i}-R, \label{eq18}
\end{equation}
where $R=-R_{1}-R_{2}$\\
Based on (\ref{eq18}) we propose the following difference scheme for approximating (\ref{eq:1}).
\begin{align} \nonumber
&L_{\varepsilon}^{N} u_{i}:=\varepsilon \frac{u_{i-1}-2 u_{i}+u_{i+1}}{\psi_{i}^{2}}+a_{i}\frac{u_{i+1}-u_{i}}{h} -b_iu_i+\lambda \frac{h}{2}\left( \sum_{j=1}^{N}\left( K_{ij-1}u_{j-1}+K_{ij}u_{j}\right)\right)=f_{i}\\
&u_0=A,\ u_N=B. \label{eq19}
\end{align}
Lastly, from (\ref{eq19}) the linear system equations for $u_{1}, u_{2}, u_{3}, \ldots, u_{N-1}$ are generated. Therefore, the generated system of linear algebraic equations can be written in a matrix form of\\
\begin{align}
&(M+S) u=F \label{eq20}
\end{align}
where $M$ and $S$ are coefficient matrix, $F$ is a given function and $u$ is an unknown function which is to be determined. The entries of $M, S$ and $F$ are given as:
$$
\begin{aligned}
& M= \begin{cases}a_{i i}=-\frac{2 \varepsilon}{\psi_{i}^{2}}+a\left(x_{i}\right)-b\left(x_{i}\right), & \text { for } i=1,2, \ldots, N-1 \\
a_{i i+1}=\frac{\varepsilon}{\psi_{i}^{2}}+\frac{a\left(x_{i}\right)}{h}, & \text { for } i=1,2, \ldots, N-2, \\
a_{i i-1}=\frac{\varepsilon}{\psi_{i}^{2}}, & \text { for } i=2,3, \ldots, N-1,\end{cases} \\
& S=\left\{\lambda h K_{ij},\ i,j=1,2,\cdots,N-1\right.
\end{aligned}
$$
and

$F=\left\{\begin{array}{l}f_{1}-\left(\left(\frac{1}{2}\lambda h K_{1,0}+\frac{\varepsilon}{\psi_{1}^{2}}\right) A+\frac{1}{2}\lambda h K_{1, N} B\right), \\ f_{i}-\frac{1}{2}\lambda h\left( K_{i, 0} A+ K_{i, N} B\right), \text { for } i=2,3, \ldots, N-2, \\ f_{N-1}-\left(\frac{1}{2}\lambda h K_{N-1,0} A+\left(\frac{1}{2}\lambda h  K_{N-1, N}+\frac{\varepsilon}{\psi_{N}^{2}}\right) B\right) .\end{array}\right.$

\section{Stability and convergence analysis}
In this section, we need to show the discrete scheme in (\ref{eq19}) satisfies the discrete maximum principle, uniform stability estimates, and uniform convergence. The difference operator, $L_{\varepsilon}^{N}$ satisfies the following lemma.

\begin{lemma}
(Discrete Minimum Principle). 
Let $u_{i}$ be any mesh function that satisfies $u_{0} \geq 0, u_{N} \geq 0$, and $L_{\varepsilon}^{N} u_{i} \leq 0, i=1,2, \ldots, N-1$. Then $u_{i} \geq$ $0, i=1,2, \ldots, N$.  
\end{lemma}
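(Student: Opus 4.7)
The plan is to argue by contradiction in the standard way for discrete maximum/minimum principles on three-point stencils. Suppose the conclusion fails, so there exists some index $k\in\{0,1,\dots,N\}$ with $u_k<0$, and choose $k$ so that $u_k=\min_{0\le i\le N} u_i$. The boundary hypotheses $u_0\ge0$ and $u_N\ge 0$ immediately force $1\le k\le N-1$, so the operator $L^N_\varepsilon$ applies at $k$.

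Next I would examine each piece of $L^N_\varepsilon u_k$ separately and show it is non-negative, with at least one piece strictly positive. Since $u_k$ is a minimum, $u_{k-1}-u_k\ge0$ and $u_{k+1}-u_k\ge0$, so $u_{k-1}-2u_k+u_{k+1}\ge0$ and the diffusion term $\varepsilon(u_{k-1}-2u_k+u_{k+1})/\psi_k^{2}$ is non-negative (recall $\psi_k^{2}>0$ by its definition in~\eqref{eq10}). The convection term $a_k(u_{k+1}-u_k)/h$ is non-negative because $a_k\ge\alpha>0$. The reactive term $-b_k u_k$ is strictly positive because $b_k>0$ and $u_k<0$ (this is where the sign of $u_k$ is used in an essential way).

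The only delicate piece is the quadrature term $\lambda\,\tfrac{h}{2}\sum_{j=1}^{N}(K_{k,j-1}u_{j-1}+K_{k,j}u_j)$, whose sign is not automatic. My plan is to bound it below using $u_j\ge u_k$ (which holds for all $j$ by the choice of $k$) and a standard smallness assumption of the form $|\lambda|\,\|K\|_\infty\,l < b_{\min}$, so that the quadrature term dominates at worst by $|\lambda|\,\|K\|_\infty\,l\,u_k$ and gets swallowed by $-b_k u_k$. Under such a hypothesis (which is the usual integral-perturbation condition implicit in these problems) one obtains
\begin{equation*}
L^N_\varepsilon u_k \;\ge\; -b_k u_k \;-\; |\lambda|\,\|K\|_\infty\,l\,|u_k| \;>\; 0,
\end{equation*}
contradicting the hypothesis $L^N_\varepsilon u_k\le0$. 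Hence no such $k$ exists and $u_i\ge0$ throughout the mesh.

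The main obstacle is exactly this last step: without a restriction relating $\lambda$, $K$ and $b$, the quadrature sum can be negative enough to overwhelm the other terms and the minimum principle fails. So the bulk of the work (and what needs to be made explicit before the rest of the convergence theory proceeds) is identifying the precise admissible range of $\lambda$ (or sign condition on $\lambda K$) that guarantees the sign balance above; the purely differential portion of the argument is routine once that condition is in force.
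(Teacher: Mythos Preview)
Your approach is essentially the same as the paper's: argue by contradiction, locate the minimum at an interior index $k$, and check the sign of each summand in $L_\varepsilon^N u_k$ to reach a contradiction. The paper's proof is in fact shorter than yours because it simply writes out $L_\varepsilon^N u_j$ with the quadrature term included and asserts the whole expression is $\ge 0$, without isolating or bounding the integral contribution at all; it also attributes the strict inequality to $u_{j+1}-u_j>0$ rather than to $-b_j u_j>0$. You are right to flag the quadrature sum as the only non-routine piece: the paper does not impose a smallness or sign hypothesis on $\lambda K$ in the statement of the lemma (a condition of that type appears only later, in the hypotheses of the convergence theorem) and does not justify why that term cannot overwhelm the others. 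So your proposal follows the paper's route but is more careful on precisely the point you identify as the main obstacle; the paper's own argument leaves that step unaddressed.
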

\begin{proof}
The proof is obtained by contradiction. Let $j$ be such that $u_{j}=$ $\min u_{i}$, and suppose that $u_{j}<0$. Clearly, $j \notin\{0, N\}, u_{j+1}-u_{j} \geq 0$, and $u_{j}-u_{j-1} \leq 0$. Therefore,
$$
\begin{aligned}
L_{\varepsilon}^{N} u_{j} & =\frac{\varepsilon}{\psi_{j}^{2}}\left(u_{j+1}-2 u_{j}+u_{j-1}\right)+\frac{a_{j}}{h}\left(u_{j+1}-u_{j}\right)-b_j u_{j} +\lambda \frac{h}{2}\left( \sum_{p=1}^{N}\left( K_{jp-1}u_{p-1}+K_{jp}u_{p}\right)\right)\\
& =\frac{\varepsilon}{\psi_{j}^{2}}\left[\left(u_{j+1}-u_{j}\right)-\left(u_{j}-u_{j-1}\right)\right]+\frac{a_{j}}{h}\left(u_{j+1}-u_{j}\right)-b_j u_{j}+\lambda \frac{h}{2}\left( \sum_{p=1}^{N}\left( K_{jp-1}u_{p-1}+K_{jp}u_{p}\right)\right) \geq 0
\end{aligned}
$$
where the strict inequality holds if $u_{j+1}-u_{j}>0$. This is a contradiction and therefore $u_{j} \geq 0$. Since $j$ is arbitrary, we have $u_{i} \geq 0, \quad i=1,2, \ldots, N$.
\end{proof}
The discrete minimum principle enables us to prove the next lemma which provides the boundedness of the solution.
\begin{lemma} \label{l1}
If $u_{i}$ is the solution of the discrete problem (\ref{eq19}), then it admits the bound
$$
\left|u_{i}\right| \leq \beta^{-1} \max _{x_{i} \in[0, l]}\left|L_{\varepsilon}^{N} u_{i}\right|+\max \{|A|,|B|\}
$$
\end{lemma}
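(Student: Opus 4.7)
The plan is a standard barrier-function argument that turns the discrete minimum principle of the preceding lemma into an a priori bound. First I would set the constant
$$M \;=\; \beta^{-1}\max_{1\le i \le N-1}\bigl|L_{\varepsilon}^{N} u_{i}\bigr| \;+\; \max\{|A|,|B|\}$$
and introduce the two comparison mesh functions $\Phi_{i}^{\pm} = M \pm u_{i}$. At the boundary indices $i=0$ and $i=N$ one has $u_{0}=A$, $u_{N}=B$, and $M \ge \max\{|A|,|B|\}$, so $\Phi_{0}^{\pm}\ge 0$ and $\Phi_{N}^{\pm}\ge 0$ immediately.

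Next I would apply the operator $L_{\varepsilon}^{N}$ to $\Phi_{i}^{\pm}$. Because the second-difference and forward-difference pieces annihilate the constant $M$, one is left with
$$L_{\varepsilon}^{N} M \;=\; -\,b_{i}\,M + \lambda\,\frac{h}{2}\sum_{j=1}^{N}\bigl(K_{i,j-1}+K_{i,j}\bigr) M.$$
This is where the main obstacle lies: to get the correct sign, I need the implicit structural hypothesis that there exists $\beta>0$ with $b(x) - \lambda\int_{0}^{l} K(x,s)\,ds \ge \beta$ on $\bar\Omega$ (so that the composite trapezoidal evaluation of this quantity stays $\ge \beta$ for $h$ small). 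Granting that, $L_{\varepsilon}^{N} M \le -\beta M$.

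Combining this with linearity,
$$L_{\varepsilon}^{N}\Phi_{i}^{\pm} \;=\; L_{\varepsilon}^{N} M \;\pm\; L_{\varepsilon}^{N} u_{i} \;\le\; -\beta M \pm L_{\varepsilon}^{N} u_{i}.$$
By the very definition of $M$ we have $\beta M \ge \max_i |L_{\varepsilon}^{N} u_{i}| \ge \pm L_{\varepsilon}^{N} u_{i}$, so $L_{\varepsilon}^{N}\Phi_{i}^{\pm}\le 0$ for $i=1,\dots,N-1$.

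Finally, I would invoke the discrete minimum principle (the preceding lemma) on each of $\Phi_{i}^{+}$ and $\Phi_{i}^{-}$, which yields $\Phi_{i}^{\pm}\ge 0$ throughout the mesh; this is exactly $-M \le u_{i}\le M$, i.e. the claimed bound. The only delicate point, and the one I would flag explicitly, is the sign/coercivity hypothesis on $b$ and the kernel $K$ needed to make $L_{\varepsilon}^{N} M \le -\beta M$; the rest of the argument is a direct application of the minimum principle to barriers.
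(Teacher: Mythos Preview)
Your argument is correct and follows the same barrier-function route as the paper: set $p=M$, form $p\pm u_i$, check the boundary signs, apply $L_{\varepsilon}^{N}$, and invoke the discrete minimum principle. You are in fact more careful than the paper on the one point you flag: the paper's computation simply writes $L_{\varepsilon}^{N}(p\pm u_i)=-b_i p\pm L_{\varepsilon}^{N}u_i$ and concludes from $b_i\ge\beta$, silently dropping the kernel-sum contribution when the operator hits the constant $p$; the coercivity hypothesis $b(x)-\lambda\int_0^l K(x,s)\,ds\ge\beta$ that you isolate is precisely what is implicitly being used.
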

\begin{proof}
We consider the functions $\psi^{ \pm}$defined by $\psi_{i}^{ \pm}=p \pm u_{i}$, where $p=\beta^{-1} \max _{x_{i} \in[0, l]}\left|L_{\varepsilon}^{N} u_{i}\right|+\max \{|A|,|B|\}$. At the boundaries we have
$\psi_{0}^{ \pm}=p \pm u_{0}=p \pm A \geq 0, \psi_{N}^{ \pm}=p \pm u_{N}=p \pm B \geq 0$.
Now for $\Omega_{N}$ we have
$$
\begin{aligned}
L_{\varepsilon}^{N} \psi_{i}^{ \pm}= & \varepsilon\left(\frac{p \pm u_{i+1}-2\left(p \pm u_{i}\right)+p \pm u_{i-1}}{\psi_{i}^{2}}\right) \\
& +a_{i}\left(\frac{(p \pm u_{i+1})-(p \pm u_{i})}{h}\right)-b_i \left( p \pm u_i\right) +\left( p \pm \lambda \frac{h}{2}\left( \sum_{j=1}^{N}\left( K_{ij-1}u_{j-1}+K_{ij}u_{j}\right)\right)\right)\\
= & -b_{i} p \pm L_{\varepsilon}^{N} u_{i} \\
= & -b_{i}\left(\beta^{-1} \max _{x_{i} \in[0, l]}\left|L_{\varepsilon}^{N} u_{i}\right|+\max (|A|,|B|)\right) \\
& \pm L_{\varepsilon}^{N} u_i \leq 0, \text { since } b_{i} \geq \beta >0
\end{aligned}
$$
From Lemma \ref{l1} it follows that $\psi_{i}^{ \pm} \geq 0, \forall x_{i} \in[0, l]$, which completes the proof.
\end{proof}
\begin{lemma} \cite{mbroh2022robust}\label{lemma3}
For a fixed mesh and for $\varepsilon \rightarrow 0$, it holds
$$
\begin{gathered}
\lim _{\varepsilon \rightarrow 0} \max _{1 \leq i \leq N-1}\left(\frac{\exp \left(\frac{-a x_{i}}{\varepsilon}\right)}{\varepsilon^{m}}\right)=0, \quad m=1,2,3, \ldots \\
\end{gathered}
$$
where $x_{i}=i h, h=\frac{1}{N}, i=1,2, \ldots, N-1$.   
\end{lemma}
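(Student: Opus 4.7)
The plan is to reduce the maximum over the mesh to a single expression evaluated at the leftmost interior node, and then invoke the classical fact that the exponential function dominates every power of $1/\varepsilon$ as $\varepsilon \to 0$.

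First I would exploit monotonicity in the index $i$. Since $a>0$ and the mesh is uniform with spacing $h>0$ that does not depend on $\varepsilon$, the map $i \mapsto \exp(-ax_i/\varepsilon)$ is strictly decreasing in $i$, while the prefactor $\varepsilon^{-m}$ is independent of $i$. Consequently the maximum over $1\le i \le N-1$ is attained at $i=1$, and
$$
\max_{1\le i\le N-1}\frac{\exp(-ax_i/\varepsilon)}{\varepsilon^{m}}
\;=\;\frac{\exp(-ah/\varepsilon)}{\varepsilon^{m}}.
$$
Because the mesh is fixed, $ah$ is a strictly positive constant independent of $\varepsilon$.

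Next I would evaluate the limit by the substitution $t=1/\varepsilon$, which turns the required statement into
$$
\lim_{t\to\infty} t^{m}\exp(-aht)\;=\;0.
$$
This is the standard fact that polynomials are dominated by exponentials at infinity, and can be established either by $m$ successive applications of L'Hôpital's rule to $t^{m}/\exp(aht)$ or, more cheaply, by the crude Taylor bound $\exp(aht)\ge (aht)^{m+1}/(m+1)!$, which yields
$$
t^{m}\exp(-aht)\;\le\;\frac{(m+1)!}{(ah)^{m+1}}\,\frac{1}{t}\;\longrightarrow\;0.
$$
Combining this with the first step and returning to the variable $\varepsilon$ gives the claim for every positive integer $m$.

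The main obstacle is essentially bookkeeping — recognizing that the supremum over the mesh is attained at the first interior node and that $ah$ stays bounded away from $0$ under the standing hypothesis $a\ge \alpha>0$ with fixed $h$. Once that reduction is in place, the convergence to $0$ is entirely standard, so I do not expect any real difficulty.
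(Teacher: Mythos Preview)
Your proposal is correct and follows essentially the same route as the paper: reduce the maximum to the first interior node $x_1=h$ by monotonicity, substitute $t=1/\varepsilon$, and then kill $t^{m}e^{-aht}$ either by $m$ applications of L'H\^{o}pital (exactly what the paper does) or by your Taylor lower bound on $e^{aht}$. The only cosmetic difference is that the paper writes an inequality $\max \le \exp(-ah/\varepsilon)/\varepsilon^{m}$ rather than your equality, but this is immaterial.
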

\begin{proof}
Consider the partition $[0,l]:=\left\{0=x_{0}<x_{1}<\cdots<x_{N-1}<x_{N}=\right.$ $l\}$. For the interior grid points, we have
$$
\begin{aligned}
& \max _{1 \leq i \leq N-1} \frac{\exp \left(\frac{-a x_{i}}{\varepsilon}\right)}{\varepsilon^{m}} \leq \frac{\exp \left(\frac{-a x_{1}}{\varepsilon}\right)}{\varepsilon^{m}}=\frac{\exp \left(\frac{-a h}{\varepsilon}\right)}{\varepsilon^{m}} \\
& \text { as } x_{1}=l-x_{N-1}=h
\end{aligned}
$$
Then, applying L'Hospital's rule $m$ times gives
$$
\lim _{\varepsilon \longrightarrow 0} \frac{\exp \left(\frac{-a h}{\varepsilon}\right)}{\varepsilon^{m}}=\lim _{r=\frac{1}{\varepsilon} \longrightarrow \infty} \frac{r^{m}}{\exp (a h r)}=\lim _{r=\frac{1}{\varepsilon} \longrightarrow \infty} \frac{m!}{(a h)^{m} \exp (a h r)}=0
$$
\end{proof}
Next, we analyze the uniform convergence of the method. From (\ref{eq18}) and (\ref{eq19}) for the error of the approximate solution, $z_{i}=u_{i}-u\left(x_{i}\right)$ we have
\begin{align} \nonumber
& L_{\varepsilon}^{N} z_{i}:=\varepsilon \frac{z_{i-1}-2 z_{i}+z_{i+1}}{\psi_{i}^{2}}+a_{i}\frac{z_{i+1}-z_{i}}{h}-b_iz_i+\lambda \frac{h}{2}\left( \sum_{j=1}^{N}\left( K_{ij-1}z_{j-1}+K_{ij}z_{j}\right)\right), \\
&    z_{0}=0, z_{N}=0, i=1,2, \ldots, N-1, \label{eq21}
\end{align}
\begin{theorem}
If $a, b, f \in C^{2}[0, l], \frac{\partial^{s} K}{\partial x^{s}} \in C[0, l]^{2}$, $(s=0,1,2)$  and $|\lambda|<\frac{\alpha}{\max _{1 \leq i \leq N} \sum_{j=0}^{N} h \eta_{j}\left|K_{i j}\right|}$, where $\eta_j= \begin{cases}1/2, j=0,N \\
1,\ \text{if}\ j=1,2,\ldots,N-1\end{cases} $ the solution U of $(\ref{eq19})$ converges $\varepsilon$-uniformly to the solution $u$ of (\ref{eq:1}). For the error of approximate solution, the following bound holds:
$$
\|U-u\|_{\infty} \leq C h
$$   
\end{theorem}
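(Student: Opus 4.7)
The plan is to bound the nodal error $z_i=u_i-u(x_i)$, which solves (\ref{eq21}) with right-hand side equal to the local truncation error $R=-R_1-R_2$, by a two-step argument: (i) a discrete stability estimate that reduces the problem to bounding $\max_i|R_i|$, and (ii) a truncation error analysis that shows $\max_i|R_i|\le Ch$ uniformly in $\varepsilon$.

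For step (i), I would treat the discrete Fredholm sum in (\ref{eq21}) as a perturbation of the differential part. Setting $Z=\max_i|z_i|$ and applying Lemma \ref{l1} to the non-integral part of $L^N_\varepsilon$ gives $|z_i|\le \beta^{-1}|R_i|+\beta^{-1}|\lambda|\,\frac{h}{2}\sum_{j}\bigl(|K_{i,j-1}|+|K_{i,j}|\bigr)Z$. Taking the maximum over $i$ rewrites the second sum in the form $\sum_{j=0}^{N}h\eta_j|K_{ij}|$, so the hypothesis $|\lambda|<\alpha/\max_i\sum_j h\eta_j|K_{ij}|$ (combined with $a\ge\alpha$ so that the stability constant effectively inherits the $\alpha$ bound) makes the prefactor strictly less than $1$, and the $Z$-term can be absorbed, yielding $Z\le C\max_i|R_i|$ with $C$ independent of $\varepsilon$ and $N$.

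For step (ii), I would split $R=-R_1-R_2$ and use the standard singularly perturbed decomposition $u=v+w$ with $|v^{(k)}|\le C$ and $|w^{(k)}(x)|\le C\varepsilon^{-k}\exp(-\alpha x/\varepsilon)$. The trapezoidal-rule residue $R_2$ from (\ref{eq16}) is controlled by $Ch^2\int_0^l|(Ku)''(s)|\,ds$; since the integral of the layer derivatives is uniformly bounded (each $\varepsilon$ from $w^{(k)}$ is compensated by integration of the exponential), this gives $|R_2|\le Ch^2$. For $R_1$, Taylor expansion around $x_i$ produces
\[
R_1=\varepsilon\!\left(1-\frac{h^2}{\psi_i^{2}}\right)\!u''(x_i)-a_i\frac{h}{2}u''(x_i)+O\bigl(h^2\,\|u^{(3)}\|\bigr),
\]
and the defining identity $\psi_i^2=\frac{h\varepsilon}{a_i}(e^{ha_i/\varepsilon}-1)$ is precisely what forces the leading $\varepsilon u''-$term to cancel the $a_i hu''/2$ term for a pure exponential layer, mirroring the constant-coefficient exactness of (\ref{eq8}). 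On the regular part $v$ one gets a clean $O(h)$ bound; on the layer part $w$ the residual contains terms of the form $\varepsilon^{-m}\exp(-ax_i/\varepsilon)$, which Lemma \ref{lemma3} shows are uniformly small at all interior mesh points. Combining these estimates yields $\max_i|R_i|\le Ch$, and with step (i) the result follows.

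The main obstacle will be the layer-component estimate of $R_1$: the non-standard denominator $\psi_i^2$ is tailored to annihilate the truncation error of the constant-coefficient homogeneous layer, but for the full variable-coefficient problem one must carefully track how the cancellation degrades. The handle on that degradation comes through Lemma \ref{lemma3}, used to show that the exponentially small quantities $\varepsilon^{-m}\exp(-ax_i/\varepsilon)$ present in $R_1$ do not destroy the $O(h)$ uniformity of the bound.
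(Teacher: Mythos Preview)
Your two-step outline matches the paper's proof exactly: the paper also moves the discrete Fredholm sum to the right, applies a maximum-principle stability bound with constant $\alpha^{-1}$, absorbs the sum via the hypothesis on $|\lambda|$ to obtain $\|z\|_\infty\le C\|R\|_\infty$, and then bounds $R$ using the layer-type derivative estimates $|u^{(k)}(x)|\le C(1+\varepsilon^{-k}e^{-\alpha x/\varepsilon})$ together with Lemma~\ref{lemma3}.

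The one substantive difference is in how $R_1$ is handled. You propose an explicit regular--layer decomposition $u=v+w$ and argue that the fitted denominator $\psi_i^2$ produces a cancellation between the $\varepsilon(1-h^2/\psi_i^2)u''$ and $a_ihu''/2$ terms on the layer component (equivalently, that the scheme is nearly exact on the constant-coefficient exponential). The paper avoids this entirely: it proves the purely algebraic inequality
\[
\varepsilon\Bigl|\tfrac{h^2}{\psi_i^2}-1\Bigr| = a_ih\,Q(\rho)\le Ch,\qquad Q(\rho)=\frac{e^{\rho}-\rho-1}{\rho(e^{\rho}-1)},\quad \rho=\tfrac{a_ih}{\varepsilon},
\]
by checking $\lim_{\rho\to0}Q=\tfrac12$ and $\lim_{\rho\to\infty}Q=0$, and then bounds each piece of $R_1$ term-by-term as $Ch\|u''\|+\varepsilon Ch^2\|u^{(4)}\|+Ch^2\|u^{(3)}\|$ before inserting the derivative bounds and invoking Lemma~\ref{lemma3}. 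Your route is conceptually transparent (it explains \emph{why} the fitting works), but the paper's $Q(\rho)$ estimate is shorter and sidesteps the need to track the variable-coefficient degradation of the exactness property that you flag as the main obstacle.
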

\begin{proof}
Applying the maximum principle, from (\ref{eq21}) we have
$$
\begin{aligned}
& \|z\|_{\infty} \leq \alpha^{-1}\left\|R-\lambda h \sum_{j=0}^{N} \eta_{j} K_{i j} z_{j}\right\|_{\infty} \\
& \leq \alpha^{-1}\|R\|_{\infty}+|\lambda| \alpha^{-1} \max _{1 \leq i \leq N} \sum_{j=0}^{N} h \eta_{j}\left|K_{i j}\right|\|z\|_{\infty}
\end{aligned}
$$
hence
$$
\|z\|_{\infty} \leq \frac{\alpha^{-1}\|R\|_{\infty}}{1-|\lambda| \alpha^{-1} \max _{1 \leq i \leq N} \sum_{j=0}^{N} h \eta_{j}\left|K_{i j}\right|}
$$
which implies
\begin{equation} \label{eq22}
\|z\|_{\infty} \leq C\|R\|_{\infty} 
\end{equation}
Further, we estimate for $\|R\|_{\infty}$.
The truncation error is given by $R=R_{1}+R_2$ where $R_{1}$ is the error is the discretization of the differential part and $R_2$ is the error in the discretization of the integral part.



\begin{equation*}
R=\varepsilon\left(\frac{d^{2}}{d x^{2}}-\sigma_{r} D^{+} D^{-}\right) u_{i}+a_{i}\left(\frac{d}{d x}-D^{+}\right) u_{i}-\frac{lh^2}{12}u^{''}(\xi) 
\end{equation*}
Rearranging and simplifying it, we obtain
\begin{align} \nonumber
R & =-\varepsilon\left(\frac{h^2}{\psi^2}-1\right) D^{+} D^{-} u_{i}+\varepsilon\left(\frac{d^{2}}{d x^{2}}-D^{+} D^{-} u_{i}\right)+a_{i}\left(\frac{d}{d x}-D^{+}\right) u_{i}-\frac{lh^2}{12}u^{''}(\xi)  \\
& \leq\left|\varepsilon\left(\frac{h^2}{\psi^2}-1\right) D^{+} D^{-} u_{i}\right|+\left|\varepsilon\left(\frac{d^{2}}{d x^{2}}-D^{+} D^{-} u_{i}\right)\right|+\left|a_{i}\left(\frac{d}{d x}-D^{+}\right) u_{i}\right|+\left|\frac{lh^2}{12}u^{''}(\xi) \right| \label{eq23}
\end{align}
We use the estimate $\varepsilon\left|\frac{h^{2}}{\psi^{2}}-1\right| \leq C h$, which can be derived from (\ref{eq10}). Indeed, define $\rho=\frac{a_{i} h}{\varepsilon}, \rho \in(0, \infty)$.Then,
\begin{equation} \label{eq24}
  \varepsilon\left|\frac{h^{2}}{\psi^{2}}-1\right|=a_{i} h\left|\frac{1}{\exp (\rho)-1}-\frac{1}{\rho}\right|=: a_{i} h Q(\rho)  
\end{equation}
By simplifying and writing the above equation explicitly, we obtain
$$
Q(\rho)=\frac{\exp (\rho)-\rho-1}{\rho(\exp (\rho)-1)}
$$
and we obtain that the limit is bounded as

$$
\lim _{\rho \longrightarrow 0} Q(\rho)=\frac{1}{2}, \quad \lim _{\rho \longrightarrow \infty} Q(\rho)=0
$$
Hence, for all $\rho \in(0, \infty)$, we have $Q(\rho) \leqslant C$.
Using the Taylor series expansion, we have the bound

\begin{equation} \label{eq25}
\left|D^{+} D^{-} u_{i}\right| \leq C\left\|u^{\prime \prime}(\xi)\right\|_{\infty},\left|\left(\frac{d}{d x}-D^{+}\right) u_{i}\right| \leq C h^{2}\left\|u^{(3)}(\xi)\right\|_{\infty},\left|\left(\frac{d^{2}}{d x^{2}}-D^{+} D^{-}\right) u_{i}\right| \leq C h^{2}\left\|u^{(4)}(\xi)\right\|_{\infty} 
\end{equation}
for $\xi \in[0,l]$. Using (\ref{eq24}) and (\ref{eq25}) into (\ref{eq23}) the truncation error becomes
\begin{equation} \label{eq26}
R \leq C h\left\|u^{\prime \prime}(\xi)\right\|_{\infty}+\varepsilon C h^2\left\|u^{(4)}(\xi)\right\|_{\infty}+C h^{2}\left\|u^{(3)}(\xi)\right\|_{\infty}
\end{equation}
Using the bound for the derivative of the solution, we obtain
\begin{align} \nonumber
\left\|R\right\|_{\infty} \leq &C h\left(1+\varepsilon^{-2} \max _{x_{i}} \exp \left(\frac{-\alpha x_{i}}{\varepsilon}\right)\right)+\varepsilon C h^{2}\left(1+\varepsilon^{-4} \max _{x_{i}} \exp \left(\frac{-\alpha x_{i}}{\varepsilon}\right)\right)\\ \nonumber
&+C h^{2}\left(1+\varepsilon^{-3} \max _{x_{i}} \exp \left(\frac{-\alpha x_{i}}{\varepsilon}\right)\right) \\\nonumber
= &C h\left(1+\varepsilon^{-2} \max _{x_{i}} \exp \left(\frac{-\alpha x_{i}}{\varepsilon}\right)\right)+ C h^{2}\left(\varepsilon+\varepsilon^{-3} \max _{x_{i}} \exp \left(\frac{-\alpha x_{i}}{\varepsilon}\right)\right)\\
&+C h^{2}\left(1+\varepsilon^{-3} \max _{x_{i}} \exp \left(\frac{-\alpha x_{i}}{\varepsilon}\right)\right) \label{eq27}
\end{align}
Since $\varepsilon^{-3} \geq \varepsilon^{-2}$ we obtain the bound
\begin{equation} \label{eq28}
\left\|R\right\|_{\infty} \leq Ch\left(1+\varepsilon^{-3} \max _{x_{i}} \exp \left(\frac{-\alpha x_{i}}{\varepsilon}\right)\right) 
\end{equation}
Using the result in Lemma \ref{lemma3}, which implies that
\begin{equation} \label{eq29}
\left\|R\right\|_{\infty} \leq C h 
\end{equation}
The bound (\ref{eq29}) together with (\ref{eq22}) completes the proof.
\end{proof}

\section*{Numerical results and discussion}
To verify the established theoretical results in this paper, we perform an experiment using the proposed numerical scheme on the problem of the form given in (\ref{eq:1}). The convergence rate and maximum pointwise errors, which have been calculated, are displayed below in tabular form.
The maximum pointwise error is specified by:
$$E_\epsilon^\mathcal{N}=\|\mathit{v}-y\|_{\infty,\Bar{\Gamma}}$$
where $\mathit{v}$ is the exact solution and $y$ is approximate solution.  In addition, the estimates of the $\epsilon-$uniform maximum pointwise error are derived from:
$$E^\mathcal{N}=\underset{\epsilon}{\max}\  
 E_\epsilon^\mathcal{N}.$$
Convergence rates are calculated by:
$$P^\mathcal{N}_\epsilon=\frac{\ln{(E^\mathcal{N}_\epsilon/E^{2\mathcal{N}}_\epsilon})}{\ln{2}}.$$
and  $\epsilon-$uniform convergence rates are derived by:
$$P^\mathcal{N}=\frac{\ln{(E^\mathcal{N}/E^{2\mathcal{N}}})}{\ln{2}}.$$
\begin{example} \label{Example1}
Consider the singularly perturbed problem
$$
\begin{aligned}
\varepsilon u^{\prime \prime}+u'+ \int_{0}^{1} x u(s) d s & =-e^{-\frac{x}{\varepsilon}}+ \varepsilon x\left(1-e^{-\frac{x}{\varepsilon}} \right)\\
u(0) & =1, u(1)=e^{-\frac{1}{\varepsilon}}
\end{aligned}
$$
\end{example}
\begin{table}[h!] \label{table1}
    \centering
    \caption{Results for 
 Example \ref{Example1}: Convergence rates and maximum pointwise errors on $\Bar{\Gamma}_\mathcal{N}$.} 
    \label{tab:table1}
\begin{tabular}{llllll}
\hline
$\epsilon$  & $\mathcal{N}=64$ & $\mathcal{N}=128$ & $\mathcal{N}=256$ & $\mathcal{N}=512$ & $\mathcal{N}=1024$ \\
\hline
$2^{0}$  & $7.803e-07$ & $1.951e-07$ & $4.877e-08$ & $1.220e-08$ & $ 3.042e-09$ \\
 & $2.00$ & $2.00$ & $2.00$ & $2.00$ &  \\
$2^{-6}$  & $4.869e-04$ & $1.236e-04$ & $3.102e-05$ & $7.764e-06$ & $1.942e-06$ \\
 & $1.98$ & $1.99$ & $2.00$ & $2.00$ &  \\
$2^{-12}$  & $2.962e-03$ & $1.452e-03$ & $6.818e-04$ & $2.934e-04$ & $1.052e-04$ \\
 & $1.03$ & $1.09$ & $1.22$ & $1.48$ &  \\
$2^{-18}$  & $3.056e-03$ & $1.547e-03$ & $7.776e-04$ & $3.893e-04$ & $1.942e-04$ \\
 & $0.98$ & $0.99$ & $1.00$ & $1.00$ &  \\ 
$2^{-24}$  & $3.057e-03$ & $1.548e-03$ & $7.792e-04$ & $3.908e-04$ & $1.957e-04$ \\ \vspace{0.4cm}
  & $0.98$ & $0.99$ & $1.00$ & $1.00$ &  \\
  
$e^\mathcal{N}$ & $3.057e-03$ & $1.548e-03$ & $7.792e-04$ & $3.908e-04$ & $1.957e-04$ \\
 $p^\mathcal{N}$ & $0.98$ & $0.99$ & $1.00$ & $1.00$ &  \\
\hline
\end{tabular}
\end{table}

\section{Conclusion}
In this study, we implemented the exact finite difference method to solve second-order linear singularly perturbed Fredholm Integro-differential equation. The stability and $\varepsilon$-uniform convergence analysis of the proposed scheme are proven. One example is used to investigate the applicability of the scheme. The effect of the perturbation parameter on the solution of the problem is shown in the table. It is demonstrated that the proposed method is uniformly convergent, with an order of convergence of 1.



\bibliographystyle{elsarticle-num} 
\bibliography{bibliography_Enhance}
\end{document}